\theoremstyle{plain}
\newtheorem{theorem}{Theorem}[subsection]
\theoremstyle{remark}
\newtheorem{remark}[theorem]{Remark}
\theoremstyle{definition}
\begin{document}
\title{A very simple estimate of the rational homological dimension of moduli spaces of Riemann surfaces with boundary and marked points}

\author{Hao Yu}

\address{Department of Mathematics, University of Minnesota, Minneapolis, MN, 55455, USA}
\email{dustless2014@163.com}




\begin{abstract}

The moduli spaces of compact and connected Riemann surfaces has been a central topic in modern mathematics. Hence their homological dimensions become important invariants. Motivated by the emergence of open-closed string theory and its mathematical counterparts, we give a coarse estimate of rational homological dimension of Riemann surfaces with possible boundary and marked points(can lie on both interior and boundary). We hope it will have applications in open-closed theory, for example, open-closed Gromov-Witten theory in the future.

\end{abstract}

\maketitle

\section{Introduction}
The moduli spaces of compact and connected Riemann surfaces with marked points has been a central and active topic in topology, differential geometry, algebraic geometry and mathematical physics. The estimate of its homological dimension, i.e., the greatest degree for which the homology group over a coefficient ring $R$, typically $Z$ or $Q$, doesn't vanish, is thus a fundamental interest. The estimate of relevant quantities along this line has been worked out before (for example see \cite{har86,m17}). There are several applications of this estimate in the literature. For example,
in \cite{cos07} Costello gives a uniqueness of the solutions of Quantum master equation in topological conformal field theory by using a coarse estimate of this homological dimension. In this paper, we give an analogue estimate of the rational homological dimensions of the moduli spaces of Riemann surfaces with (ordered or unordered) boundaries and with (ordered or unordered) punctures/marked points on the interior and boundaries of the surfaces. The latter is known as open-closed cased in mathematical physics or algebraic geometry literature. We refer the readers \cite{liu02} for the thorough study of these spaces. Although our estimate is still coarse, analogue to the works mentioned above, we hope it will be useful in open-closed theory, e.g., open-closed Gromove-Witten theory, or open-closed string theory. 

In the following sections, we actually give two estimates of the homological dimensions of moduli spaces of Riemann surfaces with boundary and punctures/marked points respectively. It turns out that the latter is stronger than
the former. However, the method used in the deduction of the first estimate is fundamental although still subtle, involving tools in both geometry(Teichmuller spaces and classifying spaces) and algebra(mapping class groups and spectral sequences), as well as the dedicate analysis of special cases. It has independent interests, and we believe it will be useful on its own. The method for the second estimate uses the established deep theory of virtual duality groups, so it will be a little simpler than the first.
\newline
\newline
\quad\quad From hyperbolic geometry and the theory of Riemann surfaces, we know that a Riemann
surface with $r$ boundary components and $n$ punctures is equivalent to the surface
equipped with a complete, finite area hyperbolic metric with geodesic boundary (to
see this, double the surface, and we have a unique hyperbolic geometric structure corresponding to the complex structure of the double, and the boundary is invariant under
the involution, so it is geodesic). Let $\mathcal{H}_S$ be the space of such metrics on the surface
$S$, let $Diff(S)$ denote the group of oriented preserving diffeomorphisms of $S$ which fixes
each puncture and boundary component, and $Diff^1_S$ be the subgroup consisting
of those diffeomorphisms which are isotropic via such diffeomorphisms to the identity. And
$Mod(S) = Diff(S)/Diff^1_S$ is known as Mapping class group of $S$. $Diff(S)$ and
$Diff^1_S$ acts on $\mathcal{H}_S$ (via metric pull back) and the quotient space $\mathcal{T}_S = \mathcal{H}_S/Diff^1_S$
is called Teichmuller space of $S$. The space $\mathcal{M}_S = \mathcal{H}_S/Diff(S)$ is the Moduli space of
$S$, whose points parameterize the isomorphism classes of complete, finite area hyperbolic
metrics on $S$ with ordered punctures and boundary components. From the definition,
we also know $\mathcal{T}_S/Mod(S) = \mathcal{M}_S$.

In the remaining of this paper, we will denote $S^{b,\vec{m}}_{g,n}$ to be a surface of genus $g$ with $b$ boundary components, labelled $1, 2,\dots ,b$ and $n$ interior punctures/marked points, $\vec{m} = (m_1,m_2,... ,m_b)$
punctures/marked points on the boundary, with $m_i$ punctures/marked points on the $i$-th
boundary component. And let $m = \sum^b_{i=1} m_i$. If $\vec{m} = \vec{0}$, we denote it as $S^b_{g,n}$, and furthermore if $b = 0$ or $n = 0$ or
both are 0 we denote it by $S_{g,n}$, $S^b_g$ or $S_g$, respectively.

\section{Estimate I}
We will prove in this section the following theorem.
\begin{theorem}\label{hd_1}
Let $M_{g,n}^{b,\vec{m}}$ be moduli space of Riemann surfaces of genus $g$ with $b$
boundary components, labelled $1, 2,\dots ,b$ and $n$ interior punctures which is also labelled,
$\vec{m} = (m_1,m_2,\cdots, m_b)$ punctures on the boundary, with $m_i$ punctures on the $i$-th boundary
component, $m = \sum^b_{i=1} m_i$, then we have the following estimate for the homology groups:
\begin{equation}
H_i(\mathcal{M}_{S^{b,\vec{m}}_{g,n}},Q) = 0 \quad \textit{for} \quad i \ge 6g - 7 + 2n + 3b + m
\end{equation}
except $(g,n,b,\vec{m}) = (0, 3, 0, 0),(0, 2, 1, 0),(0, 2, 1, 1),(0, 0, 2,((1, 0)),(0, 0, 2,(1, 1)),\\
(0, 1, 1, 1),(0, 1, 1, 2),(0, 0, 1, 3),(0, 0, 1, 4),(0, 4, 0, 0).$
\end{theorem}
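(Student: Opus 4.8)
The plan is to reduce the statement to a bound on the rational cohomological dimension of the mapping class group and then to exploit the one feature that distinguishes a bordered surface from a closed one: a boundary length is a free, unbounded real modulus. First I would observe that the claimed bound is exactly $\dim_{\mathbb R}\mathcal M_{S^{b,\vec m}_{g,n}}-1=\dim_{\mathbb R}\mathcal T_{S^{b,\vec m}_{g,n}}-1$. Since $\mathcal T_S$ is contractible and $Mod(S)$ acts on it properly discontinuously with finite stabilizers, $\mathcal M_S=\mathcal T_S/Mod(S)$ is a rational classifying space for $Mod(S)$, so $H_i(\mathcal M_S,\mathbb Q)\cong H_i(Mod(S),\mathbb Q)$ and it is enough to show $H_i$ vanishes for $i\ge 6g-7+2n+3b+m$.

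The heart of the argument is the case $b\ge 1$. Fixing a pants decomposition whose cutting system contains the first boundary curve, its hyperbolic length $L_1$ is a Fenchel--Nielsen coordinate taking all values in $(0,\infty)$ and is preserved by every mapping class, since each fixes that boundary component setwise. Thus $L_1\colon\mathcal M_S\to(0,\infty)$ is a fiber bundle over a contractible base, and $\mathcal M_S$ is homotopy equivalent to the fiber $F=L_1^{-1}(1)$, an orbifold of dimension $6g-7+2n+3b+m$. Provided $F$ is non-compact (all its components being so), one has $H_{\dim F}(F,\mathbb Q)=0$ by non-compactness, and $H_i(F,\mathbb Q)=0$ for $i>\dim F$ for dimension reasons; together these give precisely $H_i(\mathcal M_S,\mathbb Q)=0$ for $i\ge 6g-7+2n+3b+m$.

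When $b=0$ no such free length is available and the bare non-compactness of $\mathcal M_{g,n}$ falls one degree short, so here I would instead quote Harer's computation of the virtual cohomological dimension of $Mod(S_{g,n})$ \cite{har86}, refined to the rational cohomological dimension in the unstable cases (e.g. $\mathcal M_{1,1}$, which is rationally a point), and check it is at most $6g-8+2n$ save for $(g,n)\in\{(0,3),(0,4)\}$. Interior punctures and boundary marked points are organized through Birman exact sequences and point-pushing / configuration-space fibrations; their fibers are free groups, copies of $\mathbb Z$, or circles of marked-point positions, all of small cohomological dimension, and the Lyndon--Hochschild--Serre spectral sequence propagates the estimate, the generous allotment ($+2$ per interior puncture, $+3$ per boundary, $+1$ per boundary point) leaving room to spare.

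I expect the main obstacle to be showing that the reduced fiber $F$ really is a non-compact orbifold of the expected dimension for every admissible tuple outside the list. Its non-compactness must be produced by a degeneration that keeps $L_1$ fixed -- a pinchable interior geodesic, another boundary length tending to $0$, or colliding boundary marked points -- and this is exactly what fails for the rigid pants-type surfaces and for those whose sole remaining modulus is a compact circle of boundary-marked-point positions (as in $(0,2,1,1)$, where $F\simeq S^1$ and $H_1\neq0$). The delicate part is to confirm that these failures, together with the low-complexity cases where $\dim\mathcal M_S\le1$ forces $H_0\neq0$ at the bound and the hyperbolicity requirement $4g+2b+2n+m>4$, single out precisely the ten exceptional tuples.
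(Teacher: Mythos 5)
Your route is genuinely different from the paper's: instead of inducting on $b$ and $m$ through Birman-type exact sequences and the Serre spectral sequence (bottoming out at Costello's closed-surface estimate for $b=0$), you trade one boundary length for one homological degree and then invoke non-compactness to kill the top degree. The idea is attractive and, if completed, more self-contained for $b\ge 1$; but two steps are asserted rather than proved, and they carry essentially all the content. First, the claim that $L_1\colon\mathcal{M}_S\to(0,\infty)$ is a fiber bundle (hence that $\mathcal{M}_S\simeq F=L_1^{-1}(1)$) is not justified: local triviality of this orbifold quotient map is not automatic. The gap is repairable without any bundle structure: in Fenchel--Nielsen coordinates adapted to a pants decomposition containing the first boundary curve, the level set $\widetilde F=L_1^{-1}(1)\subset\mathcal{T}_S$ is a contractible, $Mod(S)$-invariant submanifold on which the action is still properly discontinuous with finite stabilizers, so $H_*(\widetilde F/Mod(S),\mathbb{Q})\cong H_*(Mod(S),\mathbb{Q})\cong H_*(\mathcal{M}_S,\mathbb{Q})$ by the same Borel-construction argument the paper uses, and you may simply take $F=\widetilde F/Mod(S)$. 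You should also record why $H_{\dim F}(F,\mathbb{Q})=0$ when $F$ is non-compact: $F$ is a $\mathbb{Q}$-homology manifold because the finite stabilizers act preserving orientation, and a connected non-compact oriented $\mathbb{Q}$-homology $n$-manifold has vanishing $H_n$ over $\mathbb{Q}$.

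Second, and more seriously, the verification that $F$ is non-compact (and of the expected dimension) for every admissible tuple outside the exceptional list, and fails to be so exactly on that list, is explicitly deferred (``I expect the main obstacle to be\dots''). That verification \emph{is} the theorem: the entire exception list lives there, and it corresponds precisely to the long case-by-case analysis in the paper's proof of the tuples $(0,2,1,\vec m)$, $(0,0,2,\vec m)$, $(0,1,1,\vec m)$, $(0,0,1,\vec m)$, $(1,0,1,\vec m)$, etc. Likewise the sentence dispatching interior punctures and boundary marked points ``through Birman exact sequences \dots leaving room to spare'' is a sketch of the paper's own mechanism rather than an argument; note in particular that boundary marked points contribute \emph{compact} circle directions to $F$, which is exactly what produces exceptions such as $(0,2,1,1)$ and $(0,0,2,(1,1))$, so they cannot be waved through on the grounds of small cohomological dimension alone. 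As it stands the proposal is a plausible and genuinely different strategy whose decisive case analysis is missing.
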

For oriented surfaces of genus $g$ (with or without boundaries and punctures), it is
well known that the corresponding Teichmuller space is contractible (it is homeomorphism to a Eucliean
space). Specifically, if $S$ is an oriented surface of genus $g$ with $b$ boundary components
and $n$ punctures in the interior, then \cite{fm08}.
$\mathcal{T}_S = R^{6g-6+2n+3b}$.
And we know (\cite{fm08}) that the mapping class group $Mod(S)$ acts properly discontinuously
on Teichmuller space $\mathcal{T}_S$, and the stabilizer is finite for each point, so by the \textit{Borel construction}
$E := \mathcal{T}_S \times_{Mod(S)} EMod(S)$, $EMod(S)$ is a universal principle $Mod(S)$ bundle
and $BMod(S) = EMod(S)/Mod(S)$ is the \textit{classifying space} of $Mod(S)$ (which classifies
all principle $Mod(S)$ bundles on any topological space). The $Mod(S)$ acts via diagonal action. Since $\mathcal{T}_S$ is contractible, $E$ is homotopic to $BMod(S)$, and the projection
$E \rightarrow \mathcal{T}_S/Mod(S) = \mathcal{M}_S$ is a fibration with fiber $EMod(S)/stab(x) = Bstab(x)$. Since
$stab(x)$ is a finite group, the cohomology of the fiber vanishes over $Q$. That is, we have
$H_*(Mod(S),Q) = H_*(BMod(S),Q) = H_*(MS,Q)$.

In the rest of the paper, we will consider the surfaces with possibly punctures/marked points
on the boundary.

For mapping class groups, we have the Birman exact sequence:
\begin{equation}
1 \rightarrow \pi_1(S^b_{g,n-1}) \rightarrow Mod(S^b_{g,n}) \rightarrow Mod(S^b_{g,n-1}) \rightarrow 1
\end{equation}
except the degenerate cases: $g = 0,b + n \leq 3 , g = 1,b + n \leq 1$.

Forgetting multiple punctures, one has
\begin{equation}
1 \rightarrow \pi_1(C(S^b_{g,n-1},k)) \rightarrow Mod(S^b_{g,n+k-1}) \rightarrow Mod(S^b_{g,n-1}) \rightarrow 1
\end{equation}
where for any surface $S$, $C(S,k)$ is the configuration space of $k$ distinct,ordered points
in $S$. That is,
\begin{equation*}
C(S,k) = S^{¡Ák} - BigDiag(S^{¡Ák})
\end{equation*}
where $S^{¡Ák}$
is the $k$-fold cartesian product of $S$ and BigDiag($S^{¡Ák}$) is the \textit{Big Diag} of
$S^{¡Ák}$
, that is, the subset of $S^{¡Ák}$ with at least two coordinates being equal. A variant of it is
\begin{equation}\label{2}
1 \rightarrow \pi_1(UTS^{b}_{g,n}) \rightarrow Mod(S^b_{g,n}) \rightarrow \mathcal{M}_{S^{b-1}_n} \rightarrow 1
\end{equation}
where $UTS^{b-1}_{g,n}$ is the unit tangent bundle (spherized tangent bundle) of $S^{b-1}_{g,n}$. The special cases above are also excluded.
Those sequences are important tools for our calculation of mapping class groups. We
will derive a sequence similar in spirit, for mapping class groups of surfaces with punctures/marked points on the boundary. More precisely, assume $S$ is an oriented surface
of genus $g$ with $b$ boundary components and $n$ punctures/marked points in the interior
and $m = (m_1,m_2,\dots, m_b)$ punctures/marked points on the boundary, with $m_i$ punctures/marked points on the $i$-th boundary component. $Mod(S)$ is the mapping class
group, i.e., the group of equivalence classes of oriented-preserving homeomorphisms of $S$ which fixes boundary component set-wise and fixes each punctures individually, the
equivalence relation is given by isotropy of the same type between them.
\newline
\newline
The following fact is obvious.
\begin{equation}\label{3}
Mod(S^{b,(m_1,m_2,\dots,m_i+1,\dots,m_b)}_{g,n}) \cong Mod(S^{b,(m_1,m_2,\dots,m_i,\dots,m_b)}_{g,n})  \quad \text{if} \quad m_i > 0
\end{equation}
So one can reduce the homology groups of moduli spaces of surfaces with $\vec{m} = (m_1,m_2,\dots,m_b)$
punctures/marked points to that with each $m_i$ at most 1. For this kind of surfaces we
have the following short exact sequence:
\begin{equation}\label{4}
(Z^1)^r \rightarrow Mod(S^{b,\vec{m})_{g,n}})\rightarrow Mod(S^b_{g,n}) \quad \text{if} \quad \chi(S^{b,n}_g) < 0
\end{equation}
$r$ is the number of $i$ for which $m_i = 0$, and $m_i = 0$ or 1 for $i = 1, 2,\dots, b$.

In order to achieve the desired homological dimension estimate, we need the following two key ingredients:
\begin{enumerate}[(i)]
\item The short exact sequence of groups corresponding to the fibration sequence of classifying spaces.
\newline
If $G_1,G_2,G_3$ are groups,
\begin{equation}
1 \rightarrow G_1 \rightarrow G_2 \rightarrow G_3 \rightarrow 1
\end{equation}
then the induced sequence
\begin{equation}
BG_1 \rightarrow BG_2 \rightarrow BG_3
\end{equation}
is a fibration, where $BG_i$ are classifying spaces of $G_i$.
\newline
This is the standard fact in classifying space theory.

\item If $F \rightarrow E\rightarrow B$ is a Serre fibration, $H_i(B,Q) = 0, H_j(F,Q) = 0$ when
$i \geq n,j \geq m + 1$, and $H_*(B,Q)$ is of finite rank, then $H_k(E,Q) = 0$ if $k \geq n + m$. In
fact, by the Serre spectral sequence,
\begin{equation}
E^2_{p,q} = H_p(F,H_q(B,Q)) \Rightarrow H_{p+q}(E,Q)
\end{equation}
the homology group we are going to compute is of index $p + q = k \geq n + m$, so either $p \geq n$
or $q \geq m + 1$. Using the assumption of homology groups $H_i(B,Q)$, $H_j(F,Q)$ and the rank finiteness of
$H_*(B,Q)$, we immediately get the conclusion.
\end{enumerate}

With all the preparation above, we can now prove the theorem \ref{hd_1}.
\begin{proof}
As before, $S^{b,\vec{m}}_{g,n}$ is a surface with possibly punctures/marked points on the interior and boundary.
\begin{enumerate}
\item
there are some $i$ such that $m_i \geq 2$. In this case, we can use \eqref{3} to reduce it to
the case that $m_i = 0$, or 1 for all $i$, i.e., $Mod(S^{b,\vec{m}}_{g,n}) \cong Mod(S^{b,\vec{m'}}_{g,n})$
, where $m¡ä_i = 1$, if $m_i \geq 1$;
\item
 all the $m_i = 0$ or 1. In this case, we use the short exact sequence \eqref{4}, provided
that the Euler class of punctures "filled in" surface is negative, and the induced fibration of
classifying spaces mentioned before. Since the classifying space of group $Z^r$ is $(S^1)^r
(i.e, K(Z^r, 1))$, we have a fibration
\begin{equation}
(S^1)^r \rightarrow \mathcal{M}_{S^{b,\vec{m}}_{g,n}}\rightarrow \mathcal{M}_{S^b_{g,n}}
\end{equation}
Since $(S^1)^r$ is an $r$ dimensional manifold, we have $H_i((S^1)^r,Q) = 0$ if $i \geq r + 1$. Using \eqref{4}, we have an induction:
\newline
if $H_i(\mathcal{M}_{S^b_{g,n}},Q) = 0$, for $i \geq 6g - 7 + 2n + 3b$, then $H_j(\mathcal{M}_{S^{b,\vec{m}}_{g,n}},Q) = 0
$ for $j \geq 6g - 7 + 2n + 3b + r$, where the condition $\chi(S^b_{g,n}) < 0$ holds.
\item
 there are no punctures on the boundary, and the surface has boundary. In this case, we
can use \eqref{2}, provided that the Euler characteristic of the disk ¡±patching¡± surface is negative,
and again, the induced fibration of classifying spaces. The classifying space of group
$\pi_1(UTS^{b-1}_{g,n})$ is $UTS^{b-1}_{g,n}$ (since $UTS^{b-1}_{g,n}$ is $K(\pi_1, 1)$), so we have a fibration sequence
\begin{equation}
UTS^{b-1}_{g,n} \rightarrow \mathcal{M}_{S^b_{g,n}} \rightarrow \mathcal{M}_{S^{b-1}_{g,n}}
\end{equation}
Because $UTS^{b-1}_{g,n}$
is 3 dimensional manifold, $H_i(UTS^{b-1}_{g,n},Q) = 0$
, for $i \geq 4$. Using \eqref{2}, we
also get an induction:
\newline
if $H_i(\mathcal{M}_{S^{b-1}_{g,n}},Q) = 0$, for $i \geq 6g - 7 + 2n + 3(b - 1)$, then $H_j(\mathcal{M}_{S^b_{g,n}}
,Q) = 0$, for $j \geq 6g - 7 + 2n + 3b$, where the condition $\chi(S^{b-1}_{g,n}) < 0$ holds.
Finally, $b = 0$. In this case, in \cite{cos07}, it has established that $H_i(\mathcal{M}_{S_{g,n}},Q) = 0$,
for $i \geq 6g - 7 + 2n$ except $(g,n) = (0, 3)$.
\end{enumerate}
\vspace{6pt}
Now assume our $S^{b,\vec{m}}_{g,n}$ has $\chi(S^{b,\vec{m}}_{g,n}) < 0$.
\newline
\newline
(i) There are no punctures on the boundary of $S^{b,\vec{m}}_{g,n}$. Then using the induction (as above)
we conclude that $H_i(\mathcal{M}_{S^{b,\vec{m}}_{g,n}},Q) = 0$ for $i \geq 6g - 7 + 2n + 3b$, except $(g,n,b) =
(1, 1, 0),(1, 0, 1),(1, 0, 0),(0, 3, 0),(0, 2, 0),(0, 1, 0),(0, 0, 0),(0, 2, 1),(0, 1, 1),(0, 0, 1),\\
(0, 1, 2),(0, 0, 2),(0, 0, 3)$.
As the Euler characteristic should be less than zero, we have
$(g,n,b) = (1, 1, 0),(1, 0, 1),(0, 3, 0), (0, 2, 1),(0, 1, 2),(0, 0, 3)$.
\newline
\newline
(ii) For other cases, the mapping class group is the same as the one of surfaces with fewer punctures
on the boundary(see \eqref{3} for precise meaning of this), and then we can still reduce it to
the one of surfaces without punctures on the boundary at all if $\chi(S^b_{g,n}) = 2 - 2g - (b + n) < 0$,
this includes all the cases except $(g,n,b) = (0, 1, 1),(0, 0, 2),(0, 0, 1)$. Note that $b > 0$. So
if $(g,n,b)$ is not those numbers, according to (i), we can reduce the surface with punctures on the boundary
to the one without punctures on the boundary by (i), except those ¡±bad¡± cases. Other
surfaces all satisfy the homological dimension estimates. Then by induction, the original
surface satisfies the homological dimension estimate.
So we are left the cases $(g,n,b, \vec{m}) = (0, 1, 1,\vec{m}),(0, 0, 2,\vec{m}),(0, 0, 1,\vec{m}),(1, 0, 1,\vec{m}), (0, 2, 1,\vec{m}),(0, 1, 2,\vec{m})$,and $(0, 0, 3,\vec{m})$. We need to treat them individually.
\newline
\newline
Let¡¯s start from $g = 1$.
\newline
\newline
If $(g,n,b, \vec{m}) = (1, 0, 1,\vec{m})$, then since the dimension of the moduli space $\mathcal{M}_{S^1_{1,0}}$ is 3 (it is the complement of trileaf knot) and $H_2(\mathcal{M}_{S^1_{1,0}}, Q) = H_3(\mathcal{M}_{S^1_{1,0}},Q) = 0$ (see \cite{mk02}), so in this case the homological dimension estimate is satisfied.
\newline
\newline
For $g = 0$, if $(g,n,b, \vec{m}) = (0, 0, 3, \vec{m})$, then since the moduli space of $S^3_{0,0}$, i.e., a pair of
pants, is the same as the Teichmuller space of $S^3_{0,0}$, for which $\mathcal{T}_{S^3_{0,0}} = R^3$
, so the homological dimension estimate is obviously satisfied. Then for all $\vec{m}$, the homological dimension is
satisfied. The same is true for $(g,n,b, \vec{m}) = (0, 1, 2,\vec{m})$.
\newline
\newline
if $(g,n,b, \vec{m}) = (0, 2, 1,\vec{m})$, then since $\mathcal{M}_{S^1_{0,2}} = R$, $H_i(\mathcal{M}_{S^1_{0,2}},Q) = 0$ when $i \geq 1$,
so when $m \geq 2$, the homological dimension estimate is satisfied.
The only cases left is $(0, 2, 1, 0),(0, 2, 1, 1)$
\newline
\newline
if $(g,n,b, \vec{m}) = (0, 0, 1, \vec{m})$, then the condition $\chi(S^{1,\vec{m}}_{0,0}) < 0$ requires $m \geq 3$, and the dimension of the moduli space of $S^{1,3}_{0,0}$
is 0, so when $m > 4$, the homological dimension estimate is
satisfied. This leaves the cases $(g,n,b, \vec{m}) = (0, 0, 1, 3),(0, 0, 1, 4)$ (these cases do not satisfy the
homological dimension estimate by dimension counting).
\newline
\newline
if $(g,n,b, \vec{m}) = (0, 0, 2,\vec{m})$, then the moduli space of $S^2_{1,0}$
is an interval $(0, 1)$ (\cite{liu02}),
of which the homology group $H_i(\mathcal{M}_{S^{2,{1,0}}_{0,0}},Q) = 0$ when $i \geq 1$. And the moduli space of $S^{2,(1,1)}_{0,0}$
is of
dimension 2 and homotopic to $S^1$
, so the homology group $H_i(\mathcal{M}_{S^{2,(1,1)}_{0,0}},Q) = 0$ when $i \geq 2$.
Thus when $m \neq (0, 1),(1, 0),(1, 1)$, the homological dimension estimate is satisfied. This
only leaves $(g,n,b, \vec{m}) = (0, 0, 2,(1, 0)),(0, 0, 2,(1, 1))$ (which do not satisfy the homological
dimension estimate).
\newline
\newline
if $(g,n,b, \vec{m}) = (0, 1, 1,\vec{m})$, because $m \geq 1$ and the dimension of the moduli space
$\mathcal{M}_{S^{1,1}_{0,1}}$
is 0, we have that when $m \geq 3$, then the homological dimension estimate is satisfied.
This leaves $(g,n,b, \vec{m}) = (0, 1, 1, 1),(0, 1, 1, 2)$.(these two cases do not satisfy the homological dimension estimate).
\newline
\newline
If there are no punctures on the boundary, then when $(g,n,b) = (1, 1, 0)$,\\$(1, 0, 1),(0, 1, 2),
(0, 0, 3)$, it is shown in \cite{cos07} and the result above that the homological dimension estimate is satisfied. When $(g,n,b) = (0, 3, 0),(0, 2, 1)$, the homological dimension estimate
is not satisfied by dimension counting.
\newline
\newline
In summary, $H_i(\mathcal{M}_{S^{b,\vec{m}}_{g,n}},Q) = 0$, for $i \geq 6g - 7 + 2n + 3b + m$, except $(g,n,b, \vec{m}) =
(0, 3, 0, 0),(0, 2, 1, 0),(0, 2, 1, 1), (0, 0, 2,(1, 0)),(0, 0, 2,(1, 1)),(0, 1, 1, 1),(0, 1, 1, 2),\\
(0, 0, 1, 3),(0, 0, 1, 4)$.
\begin{remark}
In fact, it is already from the above analysis that the homological dimension estimates can be improved to $H_i(\mathcal{M}_{S^{b,\vec{m}}_{g,n}},Q) = 0$ when $i \geq 6g - 7 + 2n + 3b + q$, $q$ is
the number of nonzero entries in $\vec{m}$.

\end{remark}
\end{proof}

\section{Estimate II}
In this section, we will get another homological dimension estimate for moduli space over coefficient $Q$, which is stronger than the one we have proposed above.
\newline
\newline
Denote
	$$
	d(g,n,b) = \left\{
	\begin{aligned}
	4g-5,  &     & n=b=0 \\
	4g + 2b + n - 4, &     & g>0,  n+b>0 \\
	2b + n-3, &     & \text{otherwise}
	\end{aligned}
	 \right.
	$$
	 and $q$ is the number of nonzero entries in $\vec{m}$.
\begin{theorem}
	$H_i(\mathcal{M}_{S_{g,n}^{b,\vec{m}}}, Q) = 0$ when $i>d(g,n,b) +q$.
\end{theorem}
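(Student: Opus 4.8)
The plan is to replace the crude bound ``(dimension of moduli space) $-1$'' used in Estimate I by the \emph{exact} virtual cohomological dimension of the mapping class group, which is precisely what the function $d(g,n,b)$ encodes. As recalled above, over $\mathbb{Q}$ the finiteness of the stabilizers of the $\mathrm{Mod}(S)$-action on Teichmuller space makes the Borel construction rationally equivalent to the quotient, so $H_\ast(\mathcal{M}_{S^b_{g,n}},Q) \cong H_\ast(\mathrm{Mod}(S^b_{g,n}),Q)$. Harer's theorem \cite{har86} states that $\mathrm{Mod}(S^b_{g,n})$ is a \emph{virtual duality group} whose virtual cohomological dimension equals $d(g,n,b)$, and I would take this as the input. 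The first step is then the purely homological-algebra fact that a group $G$ of finite virtual cohomological dimension $v$ has $H_i(G,Q)=0$ for all $i>v$: passing to a finite-index torsion-free subgroup $G'$ with $\mathrm{cd}(G')=v$, the transfer realizes $H_i(G,Q)$ as a direct summand of $H_i(G',Q)$, and the latter vanishes above $\mathrm{cd}(G')=v$. This settles the case $\vec{m}=\vec{0}$.

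To incorporate the boundary marked points I would proceed exactly as in the proof of Theorem \ref{hd_1}. Using \eqref{3} I first reduce to $m_i\in\{0,1\}$, which changes neither the mapping class group nor $q$, the number of nonzero entries. Forgetting the marked points one boundary component at a time produces a point-pushing extension whose kernel, for the first point placed on a given boundary circle, is $\pi_1(S^1)=\mathbb{Z}$ (a second point on the same circle contributes a trivial kernel, consistent with \eqref{3}). Assembling the $q$ extensions gives
\begin{equation*}
1 \rightarrow \mathbb{Z}^q \rightarrow \mathrm{Mod}(S^{b,\vec{m}}_{g,n}) \rightarrow \mathrm{Mod}(S^b_{g,n}) \rightarrow 1,
\end{equation*}
and, on classifying spaces, the fibration $(S^1)^q \to \mathcal{M}_{S^{b,\vec{m}}_{g,n}} \to \mathcal{M}_{S^b_{g,n}}$. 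Feeding this into the Serre spectral sequence estimate established above (ingredient (ii)), with the base homology (of finite rank, by Harer's finiteness) vanishing above degree $d(g,n,b)$ and the fibre $(S^1)^q$ vanishing above degree $q$, yields $H_i(\mathcal{M}_{S^{b,\vec{m}}_{g,n}},Q)=0$ for $i>d(g,n,b)+q$. Equivalently one may argue group-theoretically from $\mathrm{cd}(G)\le\mathrm{cd}(N)+\mathrm{cd}(Q)$ for an extension, which bounds $\mathrm{vcd}(\mathrm{Mod}(S^{b,\vec{m}}_{g,n}))\le d(g,n,b)+q$ directly.

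The main obstacle will be the bookkeeping for the \emph{low-complexity exceptional surfaces}: the configurations where Harer's hypothesis $\chi<0$ fails, or where the closed-surface branch $4g-5$ (valid only for $g\ge 2$) returns a value inconsistent with the actual, virtually free or finite, mapping class group --- for instance the torus $(g,n,b)=(1,0,0)$, the sphere with few punctures, and the annulus and pair of pants. For each such surface the relevant moduli space is an explicit small object (a point, $\mathbb{R}$, an interval, $\mathbb{R}^3$, or $UTS^1_{1,0}$), and I would verify the bound by the same case-by-case dimension and homotopy-type analysis used at the end of the proof of Theorem \ref{hd_1}, now tested against $d(g,n,b)+q$ rather than $6g-7+2n+3b+m$. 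The pay-off of using the sharp virtual cohomological dimension is exactly that these checks succeed uniformly, which is why Estimate II carries no list of exceptions: the crude bound of Estimate I overshot precisely on these degenerate surfaces, whereas $d(g,n,b)$ is tuned to their true cohomological size. A secondary point to confirm is that the boundary-marked extension genuinely contributes $q$ (and not $r=b-q$); the point-pushing description above is what pins this down and reconciles the estimate with the improved Remark following Theorem \ref{hd_1}.
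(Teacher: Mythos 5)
Your proposal is correct and follows essentially the same route as the paper: Harer's virtual duality (virtual cohomological dimension $d(g,n,b)$) for the base moduli space $\mathcal{M}_{S^b_{g,n}}$, the torus fibration $(S^1)^q \to \mathcal{M}_{S^{b,\vec{m}}_{g,n}} \to \mathcal{M}_{S^b_{g,n}}$ coming from the boundary marked points, the Serre spectral sequence bound from ingredient (ii), and a separate check of the low-complexity exceptional surfaces. The only (harmless) divergence is that you justify rational homology vanishing above the vcd by the transfer to a torsion-free finite-index subgroup, where the paper instead invokes the Bieri--Eckmann fact that cohomological and homological dimension agree for duality groups; your observation that the fibre contributes $q$ rather than $r=b-q$ agrees with the paper's Estimate II and its Remark.
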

\begin{proof}
(1)  It is easy to see that the moduli space $\mathcal{M}_{S^{b,\vec{m}}_{g,n}}$
is homotopy equivalent to $\mathcal{M}_{S^{b,\vec{m}}_{g,n}}$
, where $m¡ä_i = 0$ or 1, depending on whether $m_i = 0$ or $m_i \ge 0$. And $\mathcal{M}_{S^{b,\vec{m}}_{g,n}}$
is a fibration with fiber homeomorphic to $(S^1)^p$ over moduli space $\mathcal{M}_{S^b_{g,n}}$
, where $p$ is the number of non-zero entries in $\vec{m¡ä}$
. (2) In \cite{har86}, it is shown that when $\chi(S^b_{g,n}) < 0$
then the $Mod(S^b_{g,n})$ is a virtual duality group of dimension $d(g,n,b)$, in particular,
the virtual cohomological dimension of $Mod(S^b_{g,n})$ is $d(g,n,b)$. Since for a duality
group the cohomological dimension and homological dimension agrees (\cite{be73}), we have
$H_i(\mathcal{M}_{S^b_{g,n}},Q) = 0$, if $i > d(g,n,b)$. Summarizing the above two, and using the Serre spectral
sequence and (ii) in the proof of theorem \eqref{hd_1}, we get the conclusion. We still have some cases which need to be
treated separately, they are $(g,n,b, \vec{m}) = (0, 1, 1,\vec{m}),(0, 0, 2,\vec{m}),(0, 0, 1,\vec{m})$.
\newline
\newline
When $(g,n,b, \vec{m}) = (0, 1, 1, \vec{m})$ we need only consider $m > 0$. In this case, since
$H_*(\mathcal{M}_{S^{b,\vec{m}}_{g,n}},Q) = H_*(\mathcal{M}_{S^{b,(1)}_{g,n}},Q)$ and $S^{b,(1)}_{g,n}$ is of dimension 0, so $H_i(\mathcal{M}_{S^{1,\vec{m}}_{0,1}},Q) = 0$ when
$i \geq 1$. Since $d(0, 1, 1) = 0,d(0, 1, 1) + q = 1$, the result is stronger.
\newline
\newline
When $(g,n,b, \vec{m}) = (0, 0, 2,\vec{m})$, $\vec{m}$ must has at least one nonzero entry. From the result
in the proof of theorem 1, we have $H_i(\mathcal{M}_{S^{2,(m,0)}_{0,0}},Q) = 0$ when $i \geq 1$, it is stronger. And
we have $H_i(\mathcal{M}_{S^{2,(m1,m2)}_{0,0}},Q) = 0$ (i.e.,$q = 2$) when $i \geq 2$, which is also stronger.
\newline
\newline
When $(g,n,b, \vec{m}) = (0, 0, 1, \vec{m})$, then we have $m \geq 3$. When $m = 3, S^{1,\vec{m}}_{0,0}$
is of dimension 0, so $H_i(\mathcal{M}_{S^{1,\vec{m}}_{0,0}},Q) = 0$ when $i \geq 1$, which is the same as the above estimate.

\end{proof}

In this paper, we further discuss moduli spaces of surfaces with \textit{unordered} punctures and
boundary components, which is quite useful when the boundary and marked points are equipped with group actions of permutations. We similarly let $\tilde{S}^{b,\vec{m}}_{g,n}$ be a surface with unordered punctures
and boundary component.
\begin{theorem}
$H_i(\mathcal{M}_{\tilde{S}^{b,\vec{m}}_{g,n}},Q) = 0$ when $i > d(g,n,b) + q$
\end{theorem}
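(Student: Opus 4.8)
The plan is to present $\mathcal{M}_{\tilde{S}^{b,\vec{m}}_{g,n}}$ as a finite quotient of the ordered moduli space $\mathcal{M}_{S^{b,\vec{m}}_{g,n}}$ and then transfer the vanishing of Estimate II across this quotient over $Q$. The Teichmuller space $\mathcal{T}_S$ depends only on the underlying smooth surface, not on whether its punctures and boundary components are labelled; unordering changes only the group that acts. Thus $Mod(\tilde{S}^{b,\vec{m}}_{g,n})$ contains the ordered mapping class group $Mod(S^{b,\vec{m}}_{g,n})$ as a normal subgroup of finite index, the quotient being the finite group $G$ of admissible relabellings --- the symmetric group $S_n$ permuting the interior punctures together with the subgroup of $S_b$ permuting boundary components of equal type and their marked points. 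Exactly as in Section 2, $Mod(\tilde{S}^{b,\vec{m}}_{g,n})$ acts properly discontinuously on the contractible space $\mathcal{T}_S$ with finite stabilizers, so the Borel construction gives $H_*(\mathcal{M}_{\tilde{S}^{b,\vec{m}}_{g,n}},Q) = H_*(Mod(\tilde{S}^{b,\vec{m}}_{g,n}),Q)$, and likewise $H_*(\mathcal{M}_{S^{b,\vec{m}}_{g,n}},Q) = H_*(Mod(S^{b,\vec{m}}_{g,n}),Q)$.

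The second step is a transfer argument. Since $Mod(S^{b,\vec{m}}_{g,n})$ has finite index $N = |G|$ in $Mod(\tilde{S}^{b,\vec{m}}_{g,n})$, the transfer map $H_*(Mod(\tilde{S}^{b,\vec{m}}_{g,n}),Q) \to H_*(Mod(S^{b,\vec{m}}_{g,n}),Q)$ composed with the map induced by inclusion is multiplication by $N$ on $H_*(Mod(\tilde{S}^{b,\vec{m}}_{g,n}),Q)$. As $N$ is invertible in $Q$, the transfer is a split injection, so $H_*(Mod(\tilde{S}^{b,\vec{m}}_{g,n}),Q)$ is a direct summand of $H_*(Mod(S^{b,\vec{m}}_{g,n}),Q)$; equivalently, $H_*(\mathcal{M}_{\tilde{S}^{b,\vec{m}}_{g,n}},Q)$ is isomorphic to the $G$-invariant part of $H_*(\mathcal{M}_{S^{b,\vec{m}}_{g,n}},Q)$.

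The conclusion is then immediate: by Estimate II, $H_i(\mathcal{M}_{S^{b,\vec{m}}_{g,n}},Q) = 0$ for all $i > d(g,n,b) + q$, so any direct summand of it vanishes in the same range, giving $H_i(\mathcal{M}_{\tilde{S}^{b,\vec{m}}_{g,n}},Q) = 0$ for $i > d(g,n,b) + q$. The bound is unaffected by unordering because neither $d(g,n,b)$ nor $q$ (the number of nonzero entries of $\vec{m}$) changes under permuting labels of equal type.

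The main obstacle is not conceptual but lies in the bookkeeping of the first step: one must verify that $Mod(S^{b,\vec{m}}_{g,n})$ really is a finite-index subgroup of $Mod(\tilde{S}^{b,\vec{m}}_{g,n})$ with the orbit space of the residual finite action being exactly $\mathcal{M}_{\tilde{S}^{b,\vec{m}}_{g,n}}$, and in particular that a homeomorphism may interchange two boundary components only when they carry the same number of boundary marked points, so that $G$ is a genuine finite group of symmetries of the labelled data. Once this identification and the finiteness of $G$ are secured, the transfer step and the appeal to Estimate II are purely formal, and no new exceptional cases are introduced.
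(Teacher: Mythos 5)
Your proposal is correct and follows essentially the same route as the paper: both identify the ordered mapping class group as a finite-index subgroup of the unordered one (the paper records the finite quotient explicitly as $S_n \times S_b \ltimes (Z_{m_1} \times \cdots \times Z_{m_b})$) and then apply the transfer argument over $Q$ to deduce vanishing from Estimate II. Your version is slightly more careful in spelling out the Borel-construction identification and the condition for permuting boundary components, but the key idea is identical.
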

This can be shown as follows. 
\begin{proof}
It is an exact sequence
\begin{equation}
0 \rightarrow Mod(S^{b,\vec{m}}_{g,n}) \rightarrow Mod(\tilde{S}^{b,\vec{m}}_{g,n}) \rightarrow S_n \times S_b \ltimes (Z_{m_1} \times \cdots \times Z_{m_b}) \rightarrow 0
\end{equation}
The map $Mod(S^{b,\vec{m}}_{g,n}) \rightarrow Mod(\tilde{S}^{b,\vec{m}}_{g,n})$
is the canonical one, and $Mod(\tilde{S}^{b,\vec{m}}_{g,n}) \rightarrow S_n \times S_b \ltimes (Z_{m_1} \times \cdots \times Z_{m_b})$ records how the homeomorphism permutes punctures.

The third nonzero term of the exact sequence being a finite group implies that if the homology group
over $Q$ of the second nonzero term is 0 then so it is for the first nonzero term. This can
be deduced from the ¡±transfer map¡± for group homology associated to a group G and
its finite-index subgroup H (see \cite{brown82},\cite{aj04}). The composition of the restriction map with the
transfer map is: $Tr^G_H \circ Res^G_H(x) = [G : H]x$, for $x \in H_*(G,Q)$. Because $H_*(G,Q)$ is a $Q$
vector space, it means that the restriction map, $Res^G_H : H_*(G,Q) \rightarrow H_*(H,Q)$, is injective.
\end{proof}

\end{document}